\newtheorem{theorem}{Theorem}
\theoremstyle{plain}
\newtheorem{corollary}{Corollary}
\newtheorem{definition}{Definition}
\newtheorem{lemma}{Lemma}
\newtheorem{remark}{Remark}
\numberwithin{equation}{section}
\begin{document}
\title{Non-Null Weakened Mannheim \ Curves in Minkowski 3-Space}
\author{Y\i lmaz Tun\c{c}er}
\address{U\c{s}ak University, Faculty of Sciences and Arts, Department of
Mathematics. 1 Eylul Campus-64200,U\c{s}ak-TURKEY}
\email{yilmaz.tuncer@usak.edu.tr}
\author{Murat Kemal Karacan}
\address{U\c{s}ak University, Faculty of Sciences and Arts, Department of
Mathematics. 1 Eylul Campus-64200,U\c{s}ak-TURKEY}
\email{murat.karacan@usak.edu.tr}
\author{Dae Won Yoon}
\address{Department of Mathematics Education and RINS,Gyeongsang National
University, Jinju 660-701, South Korea}
\email{dwyoon@gnu.ac.kr}
\thanks{2000 Mathematics Subject Classifications:53A35,53B30}
\keywords{Mannheim Curves,Frenet-Mannheim Curves,Weakened-Mannheim Curves, }

\begin{abstract}
{\small \ }In this study, Non-null Frenet-Mannheim curves and Non-null
Weakened Mannheim curves are investigated \ in Minkowski 3-space. Some
characterizations for this curves are obtained.
\end{abstract}

\maketitle

\section{\protect\bigskip Introduction}

In the study of the fundamental theory and the characterizations of space
curves, the corresponding relations between the curves are the very
interesting and important problem. The well-known Bertrand curve is
characterized as a kind of such corresponding relation between the two
curves. For the Bertrand curve $\alpha $, it shares the normal lines with
another curve $\beta $, called Bertrand mate or Bertrand partner curve of $%
\alpha $ [2].

In 1967, H.F.Lai investigated the properties of two types of similar curves
(the Frenet-Bertrand curves and the Weakened Bertrand curves) under weakened
conditions.

In recent works, Liu and Wang (2007, 2008) are curious about the Mannheim
curves in both Euclidean and Minkowski 3-space and they obtained the
necessary and sufficient conditions between the curvature and the torsion
for a curve to be the Mannheim partner curves. Meanwhile, the detailed
discussion concerned with the Mannheim curves can be found in literature
(Wang and Liu, 2007; Liu and Wang, 2008; Orbay and Kasap, 2009) and
references there in [3].

In this paper, our main purpose is to carry out some results which were
given in [1] to Frenet-Mannheim curves and Weakened Mannheim curves.

\section{Preliminaries}

Let we denote the non-null curves $\alpha (s),$ $s\in I$ and $\beta
(s^{\star }),$ $s^{\star }\in I^{\star }$ in $E_{v}^{3}$ as $\Gamma $ and $%
\widetilde{\Gamma }$\ respectively, where $v=1,2,3$.

An arbitrary curve $\Gamma $ is called a space-like, light-like or time-like
if $\ $all of its velocity vectors $\alpha (s)^{\prime }$ space-like or
time-like respectively. Let $T$ , $N$ and $B$ be tangent, principal normal
and binormal vector fiels of $\Gamma .$ If $\Gamma $ is a spacelike curve
with a spacelike or timelike principal normal $N$, then the Frenet formulae
read

\begin{equation}
\left[ 
\begin{array}{c}
T^{\prime } \\ 
N^{\prime } \\ 
B^{\prime }%
\end{array}%
\right] =\left[ 
\begin{array}{ccc}
0 & \kappa & 0 \\ 
-\varepsilon \kappa & 0 & \tau \\ 
0 & \tau & 0%
\end{array}%
\right] \left[ 
\begin{array}{c}
T \\ 
N \\ 
B%
\end{array}%
\right]  \tag{2.1}
\end{equation}%
where $\left\langle T,T\right\rangle =1,$ $\left\langle N,N\right\rangle
=-\left\langle B,B\right\rangle =\varepsilon =\pm 1$. If $\Gamma $ is a
timelike curve then the Frenet formulae read

\begin{equation}
\left[ 
\begin{array}{c}
T^{\prime } \\ 
N^{\prime } \\ 
B^{\prime }%
\end{array}%
\right] =\left[ 
\begin{array}{ccc}
0 & \kappa & 0 \\ 
\kappa & 0 & \tau \\ 
0 & -\tau & 0%
\end{array}%
\right] \left[ 
\begin{array}{c}
T \\ 
N \\ 
B%
\end{array}%
\right]  \tag{2.2}
\end{equation}%
where $\left\langle T,T\right\rangle =-1,$ $\left\langle N,N\right\rangle
=\left\langle B,B\right\rangle =1$ for some $C^{\infty }$ scalar functions $%
\kappa (s),\tau (s)$ which are called the curvature and torsion of $\Gamma $%
. We can state both (2.1) and (2.2) as%
\begin{equation}
\left[ 
\begin{array}{c}
T^{\prime } \\ 
N^{\prime } \\ 
B^{\prime }%
\end{array}%
\right] =\left[ 
\begin{array}{ccc}
0 & \kappa & 0 \\ 
-\varepsilon _{1}\varepsilon _{2}\kappa & 0 & \tau \\ 
0 & \varepsilon _{1}\tau & 0%
\end{array}%
\right] \left[ 
\begin{array}{c}
T \\ 
N \\ 
B%
\end{array}%
\right]  \tag{2.3}
\end{equation}%
for the non-null space curve, where $\left\langle T,T\right\rangle
=\varepsilon _{1}=\pm 1$ and $\left\langle N,N\right\rangle =-\left\langle
B,B\right\rangle =\varepsilon _{2}=\pm 1.$[5].

In this paper we weaken the conditions. We shall adopt the definition of a $%
C^{\infty }$ Frenet curve as a $C^{\infty }$ regular curve $\Gamma $, for
which there exists a $C^{\infty }$ family of Frenet frames, that is,
right-handed orthonormal frames $\left\{ T,N,B\right\} $ where $T=\alpha
^{\prime }$, satisfying the Frenet equations.

\begin{definition}
Let $E_{v}^{3}$ be the 3-dimensional Minkowski space with the standard inner
product $\left\langle ,\right\rangle $. If there exists a corresponding
relationship between the space curves $\Gamma $ and $\widetilde{\Gamma }$
such that, at the corresponding points of the curves, the principal normal
lines of $\Gamma $ coincides with the binormal lines of $\widetilde{\Gamma }$%
, then $\Gamma $ is called a Mannheim curve, and $\widetilde{\Gamma }$ a
Mannheim partner curve of $\Gamma $. The pair $(\Gamma ,\widetilde{\Gamma })$
is said to be a Mannheim pair.\bigskip
\end{definition}

\begin{definition}
A Mannheim curve $\Gamma $ is a $C^{\infty }$ regular curve with non-zero
curvature for which there exists another (different) $C^{\infty }$ regular
curve $\widetilde{\Gamma }$ where $\widetilde{\Gamma }$ is of class $%
C^{\infty }$ and $\beta ^{\prime }(s^{\star })\neq 0$ ($s$ being the arc
length of $\Gamma $ only), also with non-zero curvature, in bijection with
it in such a manner that the principal normal to $\Gamma $ and the binormal
to $\widetilde{\Gamma }$ at each pair of corresponding points coincide with
the line joining the corresponding points. The curve $\widetilde{\Gamma }$
is called a Mannheim conjugate of $\Gamma $.
\end{definition}

We can state five cases for the non-null Mannheim parners $(\Gamma ,%
\widetilde{\Gamma })$ with respect o theirs causal caracterizations by using
the linearly dependence principal vectors of $\Gamma $ and binormal vectors
of $\widetilde{\Gamma }.$ The cases are

\textbf{Case 1:} If $\Gamma $\ is a timelike curve and it's Mannheim partner
is a timelike curve,

\textbf{Case 2:} If $\Gamma $\ is a timelike curve and it's Mannheim partner
is a spacelike curve with the timelike principal normal vector,

\textbf{Case 3:} If $\Gamma $\ is a spacelike curve with the timelike
principal normal vector and it's Mannheim partner is a spacelike curve with
the spacelike principal normal vector,

\textbf{Case 4:} If $\Gamma $\ is a spacelike curve with the spacelike
principal normal vector and it's Mannheim partner is a spacelike curve with
the timelike principal normal vector,

\textbf{Case 5:} If $\Gamma $\ is a spacelike curve with the spacelike
principal normal vector and it's Mannheim partner is a timelike curve.

\begin{definition}
A non-null Frenet-Mannheim curve $\Gamma $ (briefly called a NN-FM curve) is
a $C^{\infty }$ Frenet curve for which there exists another $C^{\infty }$
Frenet curve $\widetilde{\Gamma }$, where $\widetilde{\Gamma }$ is of class $%
C^{\infty }$ and $\beta ^{\prime }(s^{\star })\neq 0$, in bijection with it
so that, by suitable choice of the Frenet frames the principal normal vector 
$N_{\Gamma }(s)$ and Binormal vector $B_{\widetilde{\Gamma }}(s^{\ast })$ at
corresponding points on $\Gamma ,\widetilde{\Gamma }$ both lie on the line
joining the corresponding points. The curve $\widetilde{\Gamma }$ is called
a NN-FM conjugate of $\Gamma $.
\end{definition}

\begin{definition}
\bigskip A non-null weakened Mannheim curve $\Gamma $ (briefly called a
NN-WM curve) is a $C^{\infty }$ regular curve for which there exists another 
$C^{\infty }$ regular curve $\widetilde{\Gamma }:\beta (s^{\star })$,$s\in
I^{\star }$ , where $s^{\star }$ is the arclength of $\widetilde{\Gamma }$,
and a homeomorphism $\sigma :I\rightarrow I^{\star }$ such that

$(i)$ Here exist two (disjoint) closed subsets $Z,N$ of $I$ with void
interiors such that $\sigma \in C^{\infty }$ on $L\backslash N,$ $\left( 
\frac{ds^{\star }}{ds}\right) =0$ on $Z$, $\sigma ^{-1}\in C^{\infty }$ on $%
\sigma \left( L\backslash Z\right) $ and $\left( \frac{ds}{ds^{\star }}%
\right) =0$ on $\sigma (N).$

$(ii)$ The line joining corresponding points $s,s^{\star }$ of \ $\Gamma $
and $\widetilde{\Gamma }$ is orthogonal to $\Gamma $ and $\widetilde{\Gamma }
$ at the points $s,s^{\star }$ respectively, and is along the principal
normal to $\Gamma $ or $\widetilde{\Gamma }$ at the points $s,s^{\star }$
whenever it is well defined. The curve $\widetilde{\Gamma }$ is called a WM
conjugate of $\Gamma $.
\end{definition}

Thus for a NN-WM curve we not only drop the requirement of $\Gamma $ being a
Frenet curve, but also allow $\left( \frac{ds^{\star }}{ds}\right) $ to be
zero on a subset with void interior $\left( \frac{ds^{\star }}{ds}\right) =0$
on an interval would destroy the injectivity of the mapping $\sigma $. Since 
$\left( \frac{ds^{\star }}{ds}\right) =0$ implies that $\left( \frac{ds}{%
ds^{\star }}\right) $ does not exist, the apparently artificial requirements
in $(i)$ are in fact quite natural.

\section{Non-Null Frenet-Mannheim curves}

In this section we study the structure and characterization of NN-FM curves.
We begin with a lemma, the method used in which is classical.

\begin{lemma}
\bigskip Let $\Gamma $ be a NN-FM curve and $\widetilde{\Gamma }$ a NN-FM
conjugate of $\Gamma $. Let all quantities belonging to $\widetilde{\Gamma }$
be marked with a tilde. Let 
\begin{equation}
\alpha (s)=\beta (s^{\star })+\lambda (s^{\star })B_{\widetilde{\Gamma }%
}(s^{\star })  \tag{3.1}
\end{equation}%
Then the distance $\left\vert \lambda \right\vert $ between corresponding
points is constant for all cases, and the torsion of $\widetilde{\Gamma }$
is 
\begin{equation*}
\tau _{\widetilde{\Gamma }}=\frac{-\widetilde{\varepsilon }_{1}\tau _{\Gamma
}}{1+\varepsilon _{1}\varepsilon _{2}\lambda \mu \kappa _{\Gamma }}
\end{equation*}%
and 
\begin{equation*}
\begin{array}{cc}
(i)\text{ \ }\sinh \varphi =\lambda \widetilde{\varepsilon }_{1}\tau _{%
\widetilde{\Gamma }}\cosh \varphi \text{ \ \ \ \ \ \ \ \ \ \ \ \ \ \ \ \ \ \
\ \ \ \ \ \ \ \ } & (iii)\text{ \ }\cosh ^{2}\varphi =1+\varepsilon
_{1}\varepsilon _{2}\lambda \mu \kappa _{\Gamma } \\ 
(ii)\text{ \ }\left( 1+\varepsilon _{1}\varepsilon _{2}\lambda \mu \kappa
_{\Gamma }\right) \sinh \varphi =-\lambda \mu \tau _{\Gamma }\cosh \varphi & 
(iv)\text{ \ }\sinh ^{2}\varphi =-\lambda ^{2}\mu \widetilde{\varepsilon }%
_{1}\tau _{\Gamma }\tau _{\widetilde{\Gamma }}\text{ \ \ }%
\end{array}%
\end{equation*}%
are satisfies for the cases 1,2,4 and 5. For the case 3, the torsion of $%
\widetilde{\Gamma }$ is%
\begin{equation*}
\tau _{\widetilde{\Gamma }}=\frac{-\tau _{\Gamma }}{1-\lambda \mu \kappa
_{\Gamma }}.
\end{equation*}%
and%
\begin{equation*}
\begin{array}{cc}
(v)\text{ \ }\sin \varphi =\lambda \tau _{\widetilde{\Gamma }}\cos \varphi 
\text{ \ \ \ \ \ \ \ \ \ \ \ \ \ \ \ \ \ \ \ \ } & (vii)\text{ \ }\cos
^{2}\varphi =1-\lambda \mu \kappa _{\Gamma } \\ 
(vi)\text{ \ }\left( 1-\lambda \mu \kappa _{\Gamma }\right) \sin \varphi
=\lambda \mu \tau _{\Gamma }\cos \varphi & (viii)\text{ \ }\sin ^{2}\varphi
=\lambda ^{2}\mu \tau _{\Gamma }\tau _{\widetilde{\Gamma }}\text{ \ \ }%
\end{array}%
\end{equation*}%
are satisfies.
\end{lemma}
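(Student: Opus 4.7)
The plan is to differentiate the defining relation (3.1) with respect to $s^{\star }$, apply the Frenet equations (2.3) to $\widetilde{\Gamma }$, and exploit the collinearity $N_{\Gamma }=\eta B_{\widetilde{\Gamma}}$, $\eta =\pm 1$, forced by Definition 1. Setting $\mu =ds/ds^{\star }$, this produces
\begin{equation*}
\mu T_{\Gamma }=T_{\widetilde{\Gamma}}+\lambda ^{\prime }(s^{\star })B_{\widetilde{\Gamma}}+\lambda \widetilde{\varepsilon }_{1}\tau _{\widetilde{\Gamma}}N_{\widetilde{\Gamma}}
\end{equation*}
in cases 1, 2, 4, 5 (with a parallel identity in case 3). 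Because $T_{\Gamma }\perp N_{\Gamma }$ and $N_{\Gamma }\parallel B_{\widetilde{\Gamma}}$, the left-hand side has no $B_{\widetilde{\Gamma}}$-component, so $\lambda ^{\prime }\equiv 0$ and $|\lambda |$ is constant; this disposes of the first assertion simultaneously in all five cases.

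With $\lambda ^{\prime }$ eliminated, the displayed identity shows $T_{\Gamma }\in \mathrm{span}\{T_{\widetilde{\Gamma}},N_{\widetilde{\Gamma}}\}$. A case-by-case inspection of the causal characters gives $\widetilde{\varepsilon }_{1}\widetilde{\varepsilon }_{2}=-1$ in cases 1, 2, 4, 5, so this plane is Lorentzian, and $T_{\Gamma }$ admits a hyperbolic-angle parametrization
\begin{equation*}
T_{\Gamma }=\cosh \varphi \,T_{\widetilde{\Gamma}}+\sinh \varphi \,N_{\widetilde{\Gamma}},
\end{equation*}
with $B_{\Gamma }$ filling in the orthogonal direction in this same plane. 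Matching the $T_{\widetilde{\Gamma}}$- and $N_{\widetilde{\Gamma}}$-coefficients in the identity for $\mu T_{\Gamma }$ produces two scalar equations whose quotient is exactly identity (i).

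For (ii)--(iv) and the formula for $\tau _{\widetilde{\Gamma}}$, the next step is to differentiate $N_{\Gamma }=\eta B_{\widetilde{\Gamma}}$ once more with respect to $s$: the left side equals $-\varepsilon _{1}\varepsilon _{2}\kappa _{\Gamma }T_{\Gamma }+\tau _{\Gamma }B_{\Gamma }$ by (2.3), while the right side equals $\eta (ds^{\star }/ds)\widetilde{\varepsilon }_{1}\tau _{\widetilde{\Gamma}}N_{\widetilde{\Gamma}}$. Expanding $T_{\Gamma }$ and $B_{\Gamma }$ in the frame $\{T_{\widetilde{\Gamma}},N_{\widetilde{\Gamma}},B_{\widetilde{\Gamma}}\}$ via the hyperbolic-angle formulas and equating the three components yields (iii) and (ii) along with the torsion formula $\tau _{\widetilde{\Gamma}}=-\widetilde{\varepsilon }_{1}\tau _{\Gamma }/(1+\varepsilon _{1}\varepsilon _{2}\lambda \mu \kappa _{\Gamma })$; finally (iv) drops out by multiplying (i) and (ii) and using (iii).

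Case 3 is handled by the same recipe, with one decisive change: here $\Gamma $ has a timelike principal normal and $\widetilde{\Gamma }$ a spacelike one, so $\widetilde{\varepsilon }_{1}\widetilde{\varepsilon }_{2}=+1$ and the plane $\mathrm{span}\{T_{\widetilde{\Gamma}},N_{\widetilde{\Gamma}}\}$ is Riemannian. The hyperbolic parametrization is replaced by the circular one $T_{\Gamma }=\cos \varphi \,T_{\widetilde{\Gamma}}+\sin \varphi \,N_{\widetilde{\Gamma}}$, and the same three steps produce (v)--(viii) together with the modified formula $\tau _{\widetilde{\Gamma}}=-\tau _{\Gamma }/(1-\lambda \mu \kappa _{\Gamma })$. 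The main technical obstacle is the bookkeeping of the five signs $\varepsilon _{1},\varepsilon _{2},\widetilde{\varepsilon }_{1},\widetilde{\varepsilon }_{2},\eta $ through each differentiation, so as to verify that the compact formulas (i)--(iv) really hold uniformly over the four inequivalent non-degenerate cases 1, 2, 4, 5.
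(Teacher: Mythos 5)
Your first half tracks the paper's own argument: differentiating (3.1) with respect to $s^{\star}$, killing the $B_{\widetilde{\Gamma}}$-component by means of the collinearity $N_{\Gamma}\parallel B_{\widetilde{\Gamma}}$ to get $\lambda^{\prime}=0$, and introducing the hyperbolic (resp.\ circular) angle decomposition to read off (i) (resp.\ (v)) is exactly how the paper obtains its (3.2)--(3.4). One caution before the main point: you have reused the symbol $\mu$ for $ds/ds^{\star}$, whereas in the lemma's formulas $\mu$ is the sign $\pm 1$ in $B_{\widetilde{\Gamma}}=\mu N_{\Gamma}$ (your $\eta$); since $ds/ds^{\star}=1/\cosh\varphi$ is not a sign, the two cannot be conflated, and the identities you would end up proving carry $\eta$ where the statement carries $\mu$.

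The genuine gap is in your derivation of (ii), (iii), (iv) and the torsion formula. You propose to obtain them by differentiating the unit-vector relation $N_{\Gamma}=\eta B_{\widetilde{\Gamma}}$ and ``equating the three components.'' But each of (ii), (iii), (iv) and $\tau_{\widetilde{\Gamma}}=-\widetilde{\varepsilon}_{1}\tau_{\Gamma}/(1+\varepsilon_{1}\varepsilon_{2}\lambda\mu\kappa_{\Gamma})$ contains $\lambda$, while the equation you are equating, $-\varepsilon_{1}\varepsilon_{2}\kappa_{\Gamma}T_{\Gamma}+\tau_{\Gamma}B_{\Gamma}=\eta\,(ds^{\star}/ds)\,\widetilde{\varepsilon}_{1}\tau_{\widetilde{\Gamma}}N_{\widetilde{\Gamma}}$, is $\lambda$-free: $\lambda$ enters the problem only through the position relation (3.1), not through the Frenet derivatives of unit vectors. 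Equating components therefore yields relations among $\kappa_{\Gamma},\tau_{\Gamma},\tau_{\widetilde{\Gamma}},\varphi$ alone and cannot directly produce the $\lambda$-bearing coefficients $1+\varepsilon_{1}\varepsilon_{2}\lambda\mu\kappa_{\Gamma}$ and $\lambda\mu\tau_{\Gamma}$; at best these could be reconstructed afterwards by feeding in (i) together with $ds^{\star}/ds=\cosh\varphi$, a step you do not articulate. The paper supplies the missing ingredient by a different second differentiation: it rewrites (3.1) as $\beta=\alpha-\mu\lambda N_{\Gamma}$ and differentiates with respect to $s$ (not $s^{\star}$), which by the Frenet equations of $\Gamma$ gives $T_{\widetilde{\Gamma}}=\frac{ds}{ds^{\star}}\left[(1+\varepsilon_{1}\varepsilon_{2}\lambda\mu\kappa_{\Gamma})T_{\Gamma}-\lambda\mu\tau_{\Gamma}B_{\Gamma}\right]$ (its eq.\ (3.6)); comparison with the inverse of the angle decomposition yields $\cosh\varphi=(1+\varepsilon_{1}\varepsilon_{2}\lambda\mu\kappa_{\Gamma})\,\frac{ds}{ds^{\star}}$ and $\sinh\varphi=-\lambda\mu\tau_{\Gamma}\,\frac{ds}{ds^{\star}}$, and then (ii), (iii), (iv) and the torsion formula follow by eliminating $ds/ds^{\star}$ against (3.4). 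You need this differentiation of the position vector in the $s$-direction; differentiating the collinear unit vectors does not do the job as described.
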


\begin{proof}
\bigskip\ From (3.1) it follows that 
\begin{equation*}
\lambda (s^{\star })=\left\langle \alpha (s)-\beta (s^{\star }),B_{%
\widetilde{\Gamma }}(s^{\star })\right\rangle
\end{equation*}%
is of class $C^{\infty }$. Differentiation of (3.1) with respect to $%
s^{\star }$ gives%
\begin{equation}
T_{\Gamma }\frac{ds}{ds^{\star }}=T_{\widetilde{\Gamma }}+\lambda \widetilde{%
\varepsilon }_{1}\tau _{\widetilde{\Gamma }}N_{\widetilde{\Gamma }}-\lambda
^{\prime }B_{\widetilde{\Gamma }}  \tag{3.2}
\end{equation}%
where $\left\langle T_{\widetilde{\Gamma }},T_{\widetilde{\Gamma }%
}\right\rangle =\widetilde{\varepsilon }_{1}=\pm 1.$ Since by hypothesis we
have $B_{\widetilde{\Gamma }}=\mu N_{\Gamma }$ with $\mu =\pm 1$, scalar
multiplication of (3. 2) by $B_{\Gamma }$ gives%
\begin{equation*}
\lambda ^{\prime }=0\Rightarrow \lambda =\text{cons}\tan \text{t.}
\end{equation*}%
thus $d\left( \alpha (s),\beta (s^{\star })\right) $ is constant. Since $%
\left\langle T_{\Gamma },B_{\widetilde{\Gamma }}\right\rangle =\left\langle
N_{\Gamma },N_{\widetilde{\Gamma }}\right\rangle =\left\langle B_{\Gamma
},B_{\widetilde{\Gamma }}\right\rangle =0$ then we can write,%
\begin{eqnarray}
T_{\Gamma } &=&\cosh \varphi T_{\widetilde{\Gamma }}+\sinh \varphi N_{%
\widetilde{\Gamma }}  \TCItag{3.3} \\
B_{\Gamma } &=&\sinh \varphi T_{\widetilde{\Gamma }}+\cosh \varphi N_{%
\widetilde{\Gamma }}  \notag
\end{eqnarray}%
by using linearly dependance of $B_{\widetilde{\Gamma }}$ and $N_{\Gamma }$
where $\varphi =\varphi (s^{\star })$ is hyperbolic angle between the
tangents of $\Gamma $ and $\widetilde{\Gamma }$. By definition of NN-FM
curve we have $\frac{ds^{\star }}{ds}\neq 0$ (also $\frac{ds^{\star }}{ds}%
=\cosh \varphi \neq 0$), so that $T_{\widetilde{\Gamma }}$ (also $T_{\Gamma
} $) is $C^{\infty }$ function of $s.~$From (3.2) and (3.3) we obtain 
\begin{equation}
\frac{ds^{\star }}{ds}=\cosh \varphi \text{ \ \ and \ \ }\frac{ds^{\star }}{%
ds}\lambda \widetilde{\varepsilon }_{1}\tau _{\widetilde{\Gamma }}=\sinh
\varphi .  \tag{3.4}
\end{equation}%
Since $B_{\widetilde{\Gamma }}=\mu N_{\Gamma }$ then we can write%
\begin{equation*}
\alpha (s)=\beta (s^{\star })+\mu \lambda N_{\Gamma }(s)
\end{equation*}%
and also 
\begin{equation}
\beta (s^{\star })=\alpha (s)-\mu \lambda N_{\Gamma }(s).  \tag{3.5}
\end{equation}

By taking derivative (3.5) with respect to $s$ and compare with the invers
of (3.3) we get,%
\begin{equation}
T_{\widetilde{\Gamma }}=\frac{ds}{ds^{\star }}\left[ \left( 1+\varepsilon
_{1}\varepsilon _{2}\lambda \mu \kappa _{\Gamma }\right) T_{\Gamma }-\lambda
\mu \tau _{\Gamma }B_{\Gamma }\right] .  \tag{3.6}
\end{equation}%
Thus, we obtain%
\begin{equation}
\sinh \varphi =-\lambda \mu \tau _{\Gamma }\frac{ds}{ds^{\star }}\text{ \
and \ }\cosh \varphi =\left( 1+\varepsilon _{1}\varepsilon _{2}\lambda \mu
\kappa _{\Gamma }\right) \frac{ds}{ds^{\star }}  \tag{3.7}
\end{equation}%
and then from (3.4) and (3.7) we obtain,%
\begin{equation}
\tau _{\widetilde{\Gamma }}=\frac{-\widetilde{\varepsilon }_{1}\tau _{\Gamma
}}{1+\varepsilon _{1}\varepsilon _{2}\lambda \mu \kappa _{\Gamma }}. 
\tag{3.8}
\end{equation}

\textbf{Case 1:} If $\Gamma $\ is a timelike curve and it's Mannheim partner
is a timelike curve then $\varepsilon _{1}=\widetilde{\varepsilon }_{1}=-1$
and $\varepsilon _{2}=\widetilde{\varepsilon }_{2}=1$ and from (3.8) we
obtaine%
\begin{equation*}
\tau _{\widetilde{\Gamma }}=\frac{\tau _{\Gamma }}{1-\lambda \mu \kappa
_{\Gamma }}.
\end{equation*}

\textbf{Case 2:} If $\Gamma $\ is a timelike curve and it's Mannheim partner
is a spacelike curve with the timelike principal normal vector then $%
\varepsilon _{1}=-1,$ $\varepsilon _{2}=1$ and$\ \widetilde{\varepsilon }%
_{1}=1,$ $\widetilde{\varepsilon }_{2}=-1$ and from (3.8) we obtaine%
\begin{equation*}
\tau _{\widetilde{\Gamma }}=\frac{-\tau _{\Gamma }}{1-\lambda \mu \kappa
_{\Gamma }}.
\end{equation*}

\textbf{Case 3:} If $\Gamma $\ is a spacelike curve with the timelike
principal normal vector and it's Mannheim partner is a spacelike curve with
the spacelike principal normal vector then the angle $\varphi $ is a real
angle and (3.3) will be%
\begin{eqnarray}
T_{\Gamma } &=&\cos \varphi T_{\widetilde{\Gamma }}+\sin \varphi N_{%
\widetilde{\Gamma }}  \TCItag{3.9} \\
B_{\Gamma } &=&-\sin \varphi T_{\widetilde{\Gamma }}+\cos \varphi N_{%
\widetilde{\Gamma }}  \notag
\end{eqnarray}

From (3.2) and (3.9) we obtain 
\begin{equation}
\frac{ds^{\star }}{ds}=\cos \varphi \text{ \ \ and \ \ }\frac{ds^{\star }}{ds%
}\lambda \tau _{\widetilde{\Gamma }}=\sin \varphi .  \tag{3.10}
\end{equation}

and from (3.6) and compare with the invers of (3.9) we get,%
\begin{equation}
\sin \varphi =\lambda \mu \tau _{\Gamma }\frac{ds}{ds^{\star }}\text{ \ and
\ }\cos \varphi =\left( 1+\varepsilon _{1}\varepsilon _{2}\lambda \mu \kappa
_{\Gamma }\right) \frac{ds}{ds^{\star }}  \tag{3.11}
\end{equation}

By taking $\varepsilon _{1}=1,$ $\varepsilon _{2}=-1$ and$\ \widetilde{%
\varepsilon }_{1}=1,$ $\widetilde{\varepsilon }_{2}=1$ in (3.10) and (3.11)
we obtaine%
\begin{equation}
\tau _{\widetilde{\Gamma }}=\frac{-\tau _{\Gamma }}{1-\lambda \mu \kappa
_{\Gamma }}.  \tag{3.12}
\end{equation}

\textbf{Case 4:} If $\Gamma $\ is a spacelike curve with the spacelike
principal normal vector and it's Mannheim partner is a spacelike curve with
the timelike principal normal vector then $\varepsilon _{1}=1,$ $\varepsilon
_{2}=1$ and$\ \widetilde{\varepsilon }_{1}=1,$ $\widetilde{\varepsilon }%
_{2}=-1$ and from (3.8) we obtaine%
\begin{equation*}
\tau _{\widetilde{\Gamma }}=\frac{-\tau _{\Gamma }}{1+\lambda \mu \kappa
_{\Gamma }}.
\end{equation*}

\textbf{Case 5:} If $\Gamma $\ is a spacelike curve with the spacelike
principal normal vector and it's Mannheim partner is a timelike curve then $%
\varepsilon _{1}=1,$ $\varepsilon _{2}=1$ and$\ \widetilde{\varepsilon }%
_{1}=-1,$ $\widetilde{\varepsilon }_{2}=1$ and from (3.8) we obtaine%
\begin{equation*}
\tau _{\widetilde{\Gamma }}=\frac{\tau _{\Gamma }}{1+\lambda \mu \kappa
_{\Gamma }}.
\end{equation*}

On the other side, we prove (i), (ii), (iii), (iv) for the case 1, 2, 4 and
5. (i) and (ii) are obvious from (3.4) and (3.7) respectively. By taking $%
\frac{ds^{\star }}{ds}=\cosh \varphi $ and $\frac{ds^{\star }}{ds}=\frac{%
\sinh \varphi }{\lambda \widetilde{\varepsilon }_{1}\tau _{\widetilde{\Gamma 
}}}$ from (3.4) in (3.7) we obtain (iii) and (iv). In case 3; (v) and (vi)
are obvious from (3.10) and (3.11) respectively. By taking $\frac{ds^{\star }%
}{ds}=\cos \varphi $ and $\frac{ds^{\star }}{ds}=\frac{\sin \varphi }{%
\lambda \tau _{\widetilde{\Gamma }}}$ from (3.10) in (3.11) we obtain (vii)
and (viii).
\end{proof}

\begin{remark}
Let $\Gamma $ be a NN-FM curve and $\widetilde{\Gamma }$ a NN-FM conjugate
of $\Gamma $ which have nonzero curvature. Thus 
\begin{equation*}
\frac{\widetilde{\varepsilon }_{1}\tau _{\Gamma }+\tau _{\widetilde{\Gamma }}%
}{\kappa _{\Gamma }\tau _{\widetilde{\Gamma }}}
\end{equation*}%
is constant for the cases 1,2,4,5 and 
\begin{equation*}
\frac{\tau _{\Gamma }+\tau _{\widetilde{\Gamma }}}{\kappa _{\Gamma }\tau _{%
\widetilde{\Gamma }}}
\end{equation*}%
is constant for the case 3 too.
\end{remark}

From (3.4), (3.7), (3.10) and (3.11), it is obvious that the angle doesn't
matter real or hyperbolic, between the tangents of NN-FM curves $\Gamma $
and $\widetilde{\Gamma }$ \ is constant such that $\left\langle T_{\Gamma
},T_{\widetilde{\Gamma }}\right\rangle \neq 0$ if and only if $\tau _{\Gamma
}$ and $\kappa _{\Gamma }$ are nonzero constant. This cause that $\widetilde{%
\Gamma }$ is a line and $\Gamma $ is a cicular helix. This can be easly to
prove by using (3.4), (3.7), (3.10) and (3.11). Thus we conclude the
following remark.

\begin{remark}
Let $\Gamma $\ be a NN-FM curve with nonzero curvature and torsiyon and it's
NN-FM conjugate curve be $\widetilde{\Gamma }$. The angle between the
tangents of NN-FM curves $\Gamma $ and $\widetilde{\Gamma }$ \ is constant
such that $\left\langle T_{\Gamma },T_{\widetilde{\Gamma }}\right\rangle
\neq 0$ if and only if $\widetilde{\Gamma }$ is a line and $\Gamma $ is a
cicular helix.
\end{remark}

\begin{theorem}
Let $\Gamma $ be a $C^{\infty }$ Frenet curve with $\tau _{\Gamma }$ nowhere
zero and satisfying the equation for constants $\lambda $ with $\lambda \neq
0$ . Then $\Gamma $ is a non-planar FM curve.
\end{theorem}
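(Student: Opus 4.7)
The theorem statement as typeset is incomplete: it refers to ``the equation'' without displaying which one. From the context of Lemma~1 the intended hypothesis must be the compatibility condition obtained by eliminating $\varphi$ from (i)--(iv) (resp.\ (v)--(viii)), i.e.\ a relation of the form
\[
\kappa_\Gamma \;=\; \lambda\bigl(\varepsilon_{1}\varepsilon_{2}\kappa_\Gamma^{\,2}+\varepsilon_{2}\tau_\Gamma^{\,2}\bigr),
\]
the Minkowski analogue of the classical Euclidean Mannheim condition $\kappa=\lambda(\kappa^{2}+\tau^{2})$. I will proceed on that reading; the plan is to construct the conjugate curve explicitly and read off the NN-FM property.

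The plan is the following converse-style construction. First, define the candidate partner by
\[
\beta(s)\;=\;\alpha(s)-\mu\lambda\,N_{\Gamma}(s),\qquad \mu=\pm1,
\]
with $\mu$ chosen according to the causal case under consideration. Differentiating once and applying the Frenet formulas (2.3) gives
\[
\beta'(s)\;=\;\bigl(1+\varepsilon_{1}\varepsilon_{2}\mu\lambda\kappa_{\Gamma}\bigr)T_{\Gamma}-\mu\lambda\tau_{\Gamma}B_{\Gamma}.
\]
Because $\tau_{\Gamma}$ never vanishes, the $B_{\Gamma}$-component is never zero, hence $\beta'\neq0$; so $\widetilde{\Gamma}$ is a regular $C^{\infty}$ curve. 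Moreover $T_{\widetilde{\Gamma}}=\beta'/\|\beta'\|$ lies in $\mathrm{span}\{T_{\Gamma},B_{\Gamma}\}$, which is exactly the 2-plane orthogonal to $N_{\Gamma}$. This already forces the line joining $\alpha(s)$ and $\beta(s^{\star})$ to be orthogonal to $\widetilde{\Gamma}$.

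Next, differentiate once more to obtain $\beta''$ as an explicit linear combination of $T_{\Gamma}, N_{\Gamma}, B_{\Gamma}$. The coefficients involve $\kappa_{\Gamma}, \tau_{\Gamma}$ and their derivatives; substituting the assumed equation shows that the component of $\beta''$ transverse to $T_{\widetilde{\Gamma}}$ — which, after projection, gives the principal normal $N_{\widetilde{\Gamma}}$ — still lies in $\mathrm{span}\{T_{\Gamma},B_{\Gamma}\}$. Therefore $B_{\widetilde{\Gamma}}$, being the (Lorentzian) normalization of $T_{\widetilde{\Gamma}}\wedge N_{\widetilde{\Gamma}}$, is orthogonal to this plane and hence parallel to $N_{\Gamma}$. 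This is exactly the defining property of a NN-FM pair in Definition~3, and the non-vanishing of $\tau_{\Gamma}$ guarantees that $\Gamma$ is non-planar, since a planar Frenet curve has identically zero torsion.

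The routine parts of this plan are the two differentiations and the identification of $T_{\widetilde{\Gamma}}$. The delicate part — and the main obstacle — is the causal bookkeeping: the five cases listed after Definition~2 assign different signs to $\varepsilon_{1},\varepsilon_{2},\widetilde{\varepsilon}_{1},\widetilde{\varepsilon}_{2}$ and to $\mu$, and each enters Frenet (2.3) in a different way. One must verify case by case that the inner products $\langle\beta',\beta'\rangle$ and $\langle N_{\widetilde\Gamma},N_{\widetilde\Gamma}\rangle$ have the correct signs (i.e.\ that $\widetilde{\Gamma}$ actually realizes one of the admissible causal characters), and that $\kappa_{\widetilde{\Gamma}}\neq0$ so that $\widetilde{\Gamma}$ is a genuine Frenet curve. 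The latter reduces to checking that $\beta''$ is not proportional to $\beta'$, which follows from $\tau_{\Gamma}\neq0$ together with the assumed algebraic equation relating $\kappa_{\Gamma},\tau_{\Gamma}$ and $\lambda$.
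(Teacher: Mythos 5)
Your construction is correct in outline but runs in the opposite direction from the paper's, and it is the direction that actually matches the statement being proved. You take the given curve $\Gamma $ and build the conjugate by $\beta =\alpha -\mu \lambda N_{\Gamma }$; the hypothesized equation is then precisely what is needed to make $\left\langle dT_{\widetilde{\Gamma }}/ds,N_{\Gamma }\right\rangle =-\left\langle T_{\widetilde{\Gamma }},N_{\Gamma }^{\prime }\right\rangle $ vanish, i.e.\ with $T_{\widetilde{\Gamma }}$ proportional to $\left( 1+\varepsilon _{1}\varepsilon _{2}\mu \lambda \kappa _{\Gamma }\right) T_{\Gamma }-\mu \lambda \tau _{\Gamma }B_{\Gamma }$ and the metric signs of (2.3), the required identity is
\begin{equation*}
\left( 1+\varepsilon _{1}\varepsilon _{2}\mu \lambda \kappa _{\Gamma }\right) \kappa _{\Gamma }=\mu \lambda \tau _{\Gamma }^{2},
\end{equation*}
so your derivation in fact recovers the missing ``equation'' (your guessed sign pattern differs slightly from what this computation yields, but since the paper never displays the equation that is immaterial). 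The paper's proof instead presupposes the partner: it starts from a Frenet curve $\widetilde{\Gamma }$ with $\tau _{\widetilde{\Gamma }}$ nowhere zero, defines $\alpha =\beta +\lambda B_{\widetilde{\Gamma }}$ as in (3.1), deduces regularity of $\Gamma $ from $\tau _{\widetilde{\Gamma }}\neq 0$, and then declares $N_{\Gamma }:=\mu B_{\widetilde{\Gamma }}$ and $\kappa _{\Gamma }:=\mu \tau _{\widetilde{\Gamma }}\sinh \varphi /\left( ds/ds^{\star }\right) $; the hypothesized equation on $\kappa _{\Gamma },\tau _{\Gamma }$ is never invoked there, and the claimed identity $T_{\Gamma }^{\prime }=\kappa _{\Gamma }N_{\Gamma }$ holds only if the $T_{\widetilde{\Gamma }}$- and $N_{\widetilde{\Gamma }}$-components $-\widetilde{\varepsilon }_{1}\widetilde{\varepsilon }_{2}\kappa _{\widetilde{\Gamma }}\sinh \varphi $ and $\kappa _{\widetilde{\Gamma }}\cosh \varphi $ vanish, i.e.\ only if $\kappa _{\widetilde{\Gamma }}=0$. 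So your route proves the existence claim as stated from data on $\Gamma $ alone, at the cost of the case-by-case causal bookkeeping you already flag (non-nullity of $\beta ^{\prime }$ and non-vanishing of $\kappa _{\widetilde{\Gamma }}$ in each of the five cases), whereas the paper's route is really the converse construction of a Mannheim curve from a prescribed partner and, as written, does not derive the conclusion from the theorem's hypotheses on $\Gamma $.
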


\begin{proof}
\textbf{In the cases 1, 2, 4 and 5 : }Define the curve $\Gamma $ with
position vector%
\begin{equation*}
\alpha (s)=\beta (s^{\star })+\lambda (s^{\star })B_{\widetilde{\Gamma }%
}(s^{\star })
\end{equation*}%
Then, denoting differentiation with respect to $s^{\star }$ by a dash, we
have%
\begin{equation*}
\alpha ^{\prime }(s)\frac{ds^{\star }}{ds}=T_{\widetilde{\Gamma }}-%
\widetilde{\varepsilon }_{1}\lambda \tau _{\widetilde{\Gamma }}N_{\widetilde{%
\Gamma }}.
\end{equation*}%
Since $\tau _{\widetilde{\Gamma }}\neq 0$, it follows that $\Gamma $ is a $%
C^{\infty }$ regular curve. Then%
\begin{equation*}
T_{\Gamma }\frac{ds}{ds^{\star }}=T_{\widetilde{\Gamma }}-\widetilde{%
\varepsilon }_{1}\lambda \tau _{\widetilde{\Gamma }}N_{\widetilde{\Gamma }}.
\end{equation*}%
Hence%
\begin{equation*}
\frac{ds}{ds^{\star }}=\sqrt{\widetilde{\varepsilon }_{1}-\widetilde{%
\varepsilon }_{2}\lambda ^{2}\tau _{\widetilde{\Gamma }}^{2}}.
\end{equation*}%
And ,using (3.3)%
\begin{equation*}
T_{\Gamma }=\cosh \varphi T_{\widetilde{\Gamma }}+\sinh \varphi N_{%
\widetilde{\Gamma }}
\end{equation*}%
notice that from (3.3) we have $\sinh \varphi \neq 0$ and constand. Therefore%
\begin{equation*}
\frac{T_{\Gamma }}{ds}\frac{ds}{ds^{\star }}=-\widetilde{\varepsilon }_{1}%
\widetilde{\varepsilon }_{2}\kappa _{\widetilde{\Gamma }}T_{\widetilde{%
\Gamma }}\sinh \varphi +\kappa _{\widetilde{\Gamma }}N_{\widetilde{\Gamma }%
}\cosh \varphi +\tau _{\widetilde{\Gamma }}B_{\widetilde{\Gamma }}\sinh
\varphi
\end{equation*}%
Now define $N_{\Gamma }=\mu B_{\widetilde{\Gamma }}$,%
\begin{equation*}
\kappa _{\Gamma }=\frac{\mu }{\frac{ds}{ds^{\star }}}\tau _{\widetilde{%
\Gamma }}\sinh \varphi .
\end{equation*}%
These are $C^{\infty }$ functions of $s$ (and hence of $s^{\star }$), and%
\begin{equation*}
\frac{T_{\Gamma }}{ds}=\kappa _{\Gamma }N_{\Gamma }.
\end{equation*}

\textbf{In the case 3 : }Define the curve $\Gamma $ with position vector%
\begin{equation*}
\alpha (s)=\beta (s^{\star })+\lambda (s^{\star })B_{\widetilde{\Gamma }%
}(s^{\star })
\end{equation*}%
Then, denoting differentiation with respect to $s^{\star }$ by a dash, we
have%
\begin{equation*}
\alpha ^{\prime }(s)\frac{ds^{\star }}{ds}=T_{\widetilde{\Gamma }}-%
\widetilde{\varepsilon }_{1}\lambda \tau _{\widetilde{\Gamma }}N_{\widetilde{%
\Gamma }}.
\end{equation*}%
Since $\tau _{\widetilde{\Gamma }}\neq 0$, it follows that $\Gamma $ is a $%
C^{\infty }$ regular curve. Then%
\begin{equation*}
T_{\Gamma }\frac{ds}{ds^{\star }}=T_{\widetilde{\Gamma }}-\widetilde{%
\varepsilon }_{1}\lambda \tau _{\widetilde{\Gamma }}N_{\widetilde{\Gamma }}.
\end{equation*}%
Hence%
\begin{equation*}
\frac{ds}{ds^{\star }}=\sqrt{\widetilde{\varepsilon }_{1}-\widetilde{%
\varepsilon }_{2}\lambda ^{2}\tau _{\widetilde{\Gamma }}^{2}}.
\end{equation*}%
And ,using (3.9)%
\begin{equation*}
T_{\Gamma }=\cos \varphi T_{\widetilde{\Gamma }}+\sin \varphi N_{\widetilde{%
\Gamma }},
\end{equation*}%
notice that from (3.9) we have $\sin \varphi \neq 0$ and constand. Therefore%
\begin{equation*}
\frac{T_{\Gamma }}{ds}\frac{ds}{ds^{\star }}=-\kappa _{\widetilde{\Gamma }%
}T_{\widetilde{\Gamma }}\sin \varphi +\kappa _{\widetilde{\Gamma }}N_{%
\widetilde{\Gamma }}\cos \varphi +\tau _{\widetilde{\Gamma }}B_{\widetilde{%
\Gamma }}\sin \varphi
\end{equation*}%
Now define $N_{\Gamma }=\mu B_{\widetilde{\Gamma }}$,%
\begin{equation*}
\kappa _{\Gamma }=\frac{\mu }{\frac{ds}{ds^{\star }}}\tau _{\widetilde{%
\Gamma }}\sin \varphi .
\end{equation*}%
These are $C^{\infty }$ functions of $s$ (and hence of $s^{\star }$), and%
\begin{equation*}
\frac{T_{\Gamma }}{ds}=\kappa _{\Gamma }N_{\Gamma }.
\end{equation*}%
For all cases we define $B_{\Gamma }=T_{\Gamma }\wedge B_{\Gamma }$ and $%
\tau _{\Gamma }=-\left\langle \frac{B_{\Gamma }}{ds},N_{\Gamma
}\right\rangle $. These are also $C^{\infty }$ functions on $I$. It is then
easy to verify that with the frame $\left\{ T_{\Gamma },N_{\Gamma
},B_{\Gamma }\right\} $ and the functions $\kappa _{\Gamma },\tau _{\Gamma }$%
, the curve $\Gamma $ becomes a $C^{\infty }$ Frenet curve. But $N_{\Gamma }$
and $B_{\widetilde{\Gamma }}$ lie on the line joining corresponding points
of $\Gamma $ and $\widetilde{\Gamma }$. Thus $\Gamma $ is a FM curve and $%
\widetilde{\Gamma }$ a FM conjugate of $\Gamma $.
\end{proof}

\begin{theorem}
\bigskip Let $\Gamma $ be a plane $C^{\infty }$ Frenet curve with zero
torsion and whose curvature is either bounded below or bounded above. Then $%
\Gamma $ is a FM curve, and has FB conjugates which are plane curves.
\end{theorem}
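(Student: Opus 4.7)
The plan is to mimic the construction used in Theorem~1 but to exploit $\tau_\Gamma = 0$ so that the candidate conjugate stays inside the plane of $\Gamma$. Fix a sign $\mu\in\{-1,+1\}$ and a constant $\lambda\neq 0$, to be specified later, and define
\[
\beta(s) = \alpha(s)-\mu\lambda\, N_\Gamma(s),\qquad s\in I.
\]
Because $\Gamma$ lies in some plane $P\subset E^{3}_{\nu}$ and $N_\Gamma(s)$ is tangent to $P$ at every point, the curve $\beta$ takes values in $P$, so the candidate conjugate $\widetilde{\Gamma}:=\beta(I)$ is automatically a plane curve. This is the feature the hypothesis $\tau_\Gamma=0$ buys us and which the non-planar Theorem~1 could not guarantee.

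Differentiating and using the Frenet equations (2.3) with $\tau_\Gamma \equiv 0$ gives
\[
\beta'(s) = \bigl(1+\mu\lambda\varepsilon_{1}\varepsilon_{2}\kappa_\Gamma(s)\bigr)T_\Gamma(s),
\]
so $\beta$ is a $C^{\infty}$ regular curve precisely when the scalar factor is nowhere zero on $I$. This is where the boundedness hypothesis enters: if $\kappa_\Gamma$ is bounded above by some $M$, I would take $|\lambda|<1/M$ so that $|\mu\lambda\varepsilon_{1}\varepsilon_{2}\kappa_\Gamma|<1$ and the factor is strictly positive; if $\kappa_\Gamma$ is bounded below by some $m>0$, I would take $|\lambda|>1/m$ with the opposite sign of $\mu\lambda\varepsilon_{1}\varepsilon_{2}$, so the factor stays strictly negative. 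Either way, $\beta'$ never vanishes on $I$, and the common frame conventions of Section~2 apply.

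Once regularity is secured, I would build the Frenet frame of $\widetilde{\Gamma}$ by direct differentiation. Since $\beta'$ is a scalar multiple of $T_\Gamma$, one has $T_{\widetilde{\Gamma}}=\pm T_\Gamma$; a second differentiation places $\beta''$ in $\mathrm{span}\{T_\Gamma,N_\Gamma\}$, forcing $N_{\widetilde{\Gamma}}\parallel N_\Gamma$ and hence $B_{\widetilde{\Gamma}}=T_{\widetilde{\Gamma}}\wedge N_{\widetilde{\Gamma}}\parallel B_\Gamma$, which is a constant vector since $\tau_\Gamma=0$. Thus $\widetilde{\Gamma}$ is itself a plane $C^{\infty}$ Frenet curve with zero torsion, and the line joining $\alpha(s)$ to $\beta(s)$ is directed by $N_\Gamma(s)$. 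Using the Frenet-frame freedom available when $\tau_{\widetilde{\Gamma}}=0$ to align the $B_{\widetilde{\Gamma}}$-direction with the $\pm N_\Gamma$ line joining corresponding points, the conjugacy condition of Definition~3 is realised, and $\widetilde{\Gamma}$ is the desired conjugate.

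I expect the main obstacle to be the sign/magnitude juggling in the regularity step: one must pick $\lambda$ and $\mu$ consistently across the five causal sub-cases of Lemma~1 so that $1+\mu\lambda\varepsilon_{1}\varepsilon_{2}\kappa_\Gamma$ is bounded away from zero on all of $I$. The one-sided boundedness of $\kappa_\Gamma$ is precisely what makes such a choice possible; without it, $\kappa_\Gamma$ could accumulate on the critical value $-1/(\mu\lambda\varepsilon_{1}\varepsilon_{2})$ for every nonzero $\lambda$, and no choice of constant $\lambda$ would yield a globally regular $\beta$.
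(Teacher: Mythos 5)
Your construction is the same as the paper's: you set $\beta=\alpha-\mu\lambda N_{\Gamma}$, differentiate with $\tau_{\Gamma}=0$ to get $\beta'=(1+\varepsilon_{1}\varepsilon_{2}\mu\lambda\kappa_{\Gamma})T_{\Gamma}$, and use the one-sided bound on $\kappa_{\Gamma}$ to choose a constant $\lambda\neq0$ making the scalar factor nowhere zero. The paper does exactly this, phrasing the choice as $\kappa<-\tfrac{1}{\lambda}$ on all of $I$ or $\kappa>-\tfrac{1}{\lambda}$ on all of $I$, which is the cleaner way to exploit a one-sided bound; your version with $|\lambda|<1/M$ tacitly assumes a two-sided bound, and ``bounded below by $m>0$'' is stronger than ``bounded below''. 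These are minor; up to the regularity of $\widetilde{\Gamma}$ your proposal is essentially the paper's argument.

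The genuine gap is in your final step, where you pass from the computed frame of $\widetilde{\Gamma}$ to the Mannheim condition. You correctly observe that differentiation forces $T_{\widetilde{\Gamma}}=\pm T_{\Gamma}$, $N_{\widetilde{\Gamma}}\parallel N_{\Gamma}$ and $B_{\widetilde{\Gamma}}\parallel B_{\Gamma}$, i.e.\ the binormal of $\widetilde{\Gamma}$ is the constant normal of the common plane. But Definition 3 requires $B_{\widetilde{\Gamma}}$ to lie along the line joining corresponding points, which is the direction $N_{\Gamma}$ --- an in-plane direction orthogonal to $B_{\Gamma}$. The ``Frenet-frame freedom when $\tau_{\widetilde{\Gamma}}=0$'' that you invoke to rotate $B_{\widetilde{\Gamma}}$ by a right angle into $\mu N_{\Gamma}$ does not survive the Frenet equations: wherever $\kappa_{\widetilde{\Gamma}}\neq0$, the equation $T_{\widetilde{\Gamma}}'=\kappa_{\widetilde{\Gamma}}N_{\widetilde{\Gamma}}$ pins $N_{\widetilde{\Gamma}}$ to $\pm N_{\Gamma}$ and hence $B_{\widetilde{\Gamma}}$ to $\pm B_{\Gamma}$; and if you instead impose $B_{\widetilde{\Gamma}}=\mu N_{\Gamma}$ directly, then $B_{\widetilde{\Gamma}}'=\varepsilon_{1}\tau_{\widetilde{\Gamma}}N_{\widetilde{\Gamma}}$ forces $N_{\widetilde{\Gamma}}\parallel N_{\Gamma}'\parallel T_{\Gamma}$, contradicting $N_{\widetilde{\Gamma}}\perp T_{\widetilde{\Gamma}}=\pm T_{\Gamma}$ unless $\kappa_{\Gamma}=0$. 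So the reassignment is available only where the curves are straight lines. To be fair, the paper's own proof stops at the regularity of $\widetilde{\Gamma}$ and declares the rest ``a straightforward matter to verify'', so it does not resolve this point either; but since you made the missing step explicit, you should know that as written it fails, and that verifying the Mannheim condition for a plane conjugate is exactly where the real difficulty of this theorem lies.
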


\begin{proof}
We prove the theorem in all cases for the curves $\Gamma $ and $\widetilde{%
\Gamma }$. Let $\Gamma $ be a curve satisfying the conditions of the
hypothesis. Then there are non-zero numbers $\lambda $ such that $\kappa _{%
\widetilde{\Gamma }}<-\frac{1}{\lambda }$ on $I^{\star }$ or $\kappa _{%
\widetilde{\Gamma }}>-\frac{1}{\lambda }$ on $I^{\star }$. For any such $%
\lambda $, consider the plane curve $\Gamma $ with position vector$\alpha
(s)=\beta (s^{\star })+\lambda (s^{\star })B_{\widetilde{\Gamma }}(s^{\star
})$%
\begin{equation*}
\beta =\alpha -\mu \lambda N_{\Gamma }.
\end{equation*}%
Then%
\begin{equation*}
T_{\widetilde{\Gamma }}=\left( 1+\varepsilon _{1}\varepsilon _{2}\mu \lambda
\kappa _{\Gamma }\right) T_{\Gamma }
\end{equation*}%
Since $\left( 1+\varepsilon _{1}\varepsilon _{2}\mu \lambda \kappa _{\Gamma
}\right) \neq 0$, $\widetilde{\Gamma }$ is a $C^{\infty }$ regular curve. It
is then a straightforward matter to verify that $\widetilde{\Gamma }$ is a
FM conjugate of $\Gamma $.
\end{proof}

\begin{corollary}
The line joining corresponding points of\ $\Gamma $ and $\widetilde{\Gamma }$
is orthogonal to both $\Gamma $ and $\widetilde{\Gamma }$ at the points $%
s,s^{\star }$ respectively,
\end{corollary}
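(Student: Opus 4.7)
The plan is to derive the corollary directly from equation (3.1) and the Mannheim condition $B_{\widetilde{\Gamma}} = \mu N_{\Gamma}$ (with $\mu = \pm 1$) that was established in the proof of Lemma 1. No real calculation is needed; the result is a formal consequence of the fact that the joining vector lies simultaneously in two directions, each of which is orthogonal to a tangent vector by virtue of being part of a Frenet frame.

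First I would observe that from (3.1) we have $\alpha(s) - \beta(s^{\star}) = \lambda(s^{\star})\, B_{\widetilde{\Gamma}}(s^{\star})$, so the line joining the corresponding points $\alpha(s)$ and $\beta(s^{\star})$ is parallel to $B_{\widetilde{\Gamma}}(s^{\star})$. Since $\{T_{\widetilde{\Gamma}}, N_{\widetilde{\Gamma}}, B_{\widetilde{\Gamma}}\}$ is a Frenet frame, we have $\langle T_{\widetilde{\Gamma}}, B_{\widetilde{\Gamma}}\rangle = 0$, which yields orthogonality of the joining line to $\widetilde{\Gamma}$ at $s^{\star}$.

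Next I would use the Mannheim hypothesis $B_{\widetilde{\Gamma}} = \mu N_{\Gamma}$ (with $\mu = \pm 1$), already exploited in the proof of Lemma 1, to rewrite
\begin{equation*}
\alpha(s) - \beta(s^{\star}) = \mu\lambda\, N_{\Gamma}(s).
\end{equation*}
Thus the joining line is also parallel to $N_{\Gamma}(s)$, and since $\langle T_{\Gamma}, N_{\Gamma}\rangle = 0$ in the Frenet frame of $\Gamma$, the joining line is orthogonal to $\Gamma$ at $s$ as well. This covers all five causal cases simultaneously, since the Mannheim relation $B_{\widetilde{\Gamma}} = \mu N_{\Gamma}$ and the orthogonality relations within each Frenet frame are invariant under the different sign choices of $\varepsilon_{1},\varepsilon_{2},\widetilde{\varepsilon}_{1},\widetilde{\varepsilon}_{2}$.

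There is essentially no obstacle: the only thing to note is that in the proof of Theorem 2 (the plane case), the existence of a well-defined Frenet frame for $\widetilde{\Gamma}$ rests on $\bigl(1+\varepsilon_{1}\varepsilon_{2}\mu\lambda\kappa_{\Gamma}\bigr)\neq 0$, which was already verified there, so $T_{\widetilde{\Gamma}}$ is genuinely $C^{\infty}$ and the inner product $\langle T_{\widetilde{\Gamma}}, B_{\widetilde{\Gamma}}\rangle = 0$ is well-defined at every point. Hence the corollary follows immediately.
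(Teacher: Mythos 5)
Your proposal is correct and coincides with the justification the paper itself leaves implicit: the corollary is stated without proof precisely because, as you note, the joining vector equals $\lambda B_{\widetilde{\Gamma}}(s^{\star})=\mu\lambda N_{\Gamma}(s)$ by (3.1) and the Mannheim condition, and each of these is orthogonal (in the Lorentzian sense) to the corresponding tangent by the Frenet-frame relations, in every causal case. Nothing further is needed; the remark about Theorem 2's nonvanishing condition is harmless but not required for the corollary.
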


\section{Non-Null Weakened Mannheim curves}

\begin{definition}
Let $D$ be a subset of a topological space $X$. A function on $X$ into a set 
$Y$ is said to be $D$-piecewise constant if it is constant on each component
of $D$.
\end{definition}

\begin{lemma}
Let $X$ be a proper interval on the real line and $D$ an open subset of $X$.
Then a necessary and sufficient condition for every continuous, $D$%
-piecewise constant real function on $X$ to be constant is that $X\backslash
D$ should have empty dense-in-itself kernel.
\end{lemma}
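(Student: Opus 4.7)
The plan is to prove the biconditional by splitting into sufficiency and necessity, using a local/global connectedness argument for the first and a Cantor-staircase type construction for the second. Throughout, for a continuous $D$-piecewise constant function $f:X\to\mathbb{R}$, define
\[
U=\{x\in X:f\text{ is constant on some neighborhood of }x\},
\]
so $U$ is open in $X$, $D\subseteq U$, and $f$ is locally constant on $U$.

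For sufficiency, assume $X\setminus D$ has empty dense-in-itself kernel and let $f$ be as above. Put $F=X\setminus U$; then $F$ is closed in $X$ and contained in $X\setminus D$. I claim $F$ is dense-in-itself. If some $x_{0}\in F$ were isolated in $F$, there would be an open interval $V\subseteq X$ about $x_{0}$ with $V\cap F=\{x_{0}\}$, so $V\setminus\{x_{0}\}\subseteq U$. On each of the (one or two) components of $V\setminus\{x_{0}\}$, $f$ is locally constant on an interval, hence constant; continuity of $f$ at $x_{0}$ forces these constants to coincide with $f(x_{0})$, making $f$ constant on all of $V$ and placing $x_{0}\in U$, a contradiction. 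Thus $F$ is a dense-in-itself subset of $X\setminus D$, and by hypothesis $F=\emptyset$. Therefore $U=X$, and the locally constant function $f$ on the connected set $X$ is constant.

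For necessity I prove the contrapositive: assuming the dense-in-itself kernel $K$ of $X\setminus D$ is nonempty, I exhibit a continuous, $D$-piecewise constant, nonconstant function. Since $X\setminus D$ is closed in $X$, the closure $\overline{K}$ is again a dense-in-itself subset of $X\setminus D$; by maximality of the kernel, $\overline{K}\subseteq K$, so $K$ is a nonempty perfect set in $\mathbb{R}$. Every nonempty perfect subset of $\mathbb{R}$ carries an atomless Borel probability measure $\nu$ supported on it (for example, by embedding a Cantor set into $K$ and pushing the standard Cantor measure forward). Set
\[
f(x)=\nu\bigl((-\infty,x]\bigr),\qquad x\in X.
\]
Then $f$ is monotone nondecreasing, continuous (because $\nu$ is atomless), and constant on each component of $X\setminus K$ (because $\nu$ has no mass outside $K$). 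Since $D\subseteq X\setminus K$, every component of $D$ lies inside a single component of $X\setminus K$, so $f$ is $D$-piecewise constant. Finally $\nu(K)=1$ and $K\subseteq X$, so $f$ rises from $0$ to $1$ on $X$ and is not constant.

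The main technical obstacle is the necessity direction, specifically extracting a nonempty perfect subset of $X\setminus D$ from the kernel hypothesis and then manufacturing an atomless probability measure on it; this is classical Cantor--Bendixson material, but has to be assembled carefully. The sufficiency direction is more routine, but the subtlety there is propagating the local constant value of $f$ across a potential isolated point of $F$ using continuity of $f$, which is precisely what converts the topological hypothesis on $X\setminus D$ into a statement about $f$.
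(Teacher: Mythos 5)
Your argument is correct, but there is nothing in the paper to compare it against: the authors state this lemma without proof (it is imported verbatim from Lai's 1967 paper on weakened Bertrand curves, reference [1]), so your write-up supplies a proof where the source offers none. Your sufficiency direction is the standard one and is complete: the set $F$ of points where $f$ fails to be locally constant is closed, avoids $D$, and can have no isolated points by the continuity argument you give, so the kernel hypothesis kills it and local constancy on the connected interval $X$ finishes the job. Your necessity direction is also sound, though one sentence is imprecise: from $\overline{K}\subseteq K$ (closure taken in $X$, which is what the maximality argument actually delivers) you conclude that $K$ is ``a nonempty perfect set in $\mathbb{R}$,'' but $K$ need only be closed in $X$, not in $\mathbb{R}$ (take $X=(0,1)$, $D=\varnothing$, $K=(0,1)$). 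This costs you nothing: $K$ is then a nonempty dense-in-itself closed subset of the Polish space $X$, hence contains a Cantor set $C$, and pushing the Cantor measure onto $C$ gives exactly the atomless measure $\nu$ you need, with $\operatorname{supp}\nu=C\subseteq X\setminus D$; the distribution function $f(x)=\nu\left((-\infty,x]\right)$ then works as you describe, and $f(\min C)=0\neq 1=f(\max C)$ with both points in $X$ shows $f$ is nonconstant. I would only suggest rephrasing that one step (``$K$ is perfect in $X$, hence a nonempty dense-in-itself Polish space'') so the appeal to the Cantor-set embedding is literally licensed.
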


We notice, however, that if $D$ is dense in $X$, any $C^{1}$ and $D$%
-piecewise constant real function on $X$ must be constant, even if $D$ has
non-empty dense-in-itself kernel.

\begin{theorem}
A NN-WM curve for which $N$ and $Z$ have empty dense-in-itself kernels is a
NN-FM curve.
\end{theorem}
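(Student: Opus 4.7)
The strategy is to transport the local NN-FM structure, which is available on the open dense set $I\setminus(N\cup Z)$, to all of $I$, using Lemma~1 (of Section~3) to extract the right local invariants and Lemma~2 (of Section~4) to globalize them.

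First, I would work locally. On each connected component $J$ of $I\setminus(N\cup Z)$, both $\sigma$ and $\sigma^{-1}$ are $C^{\infty}$ with nowhere-vanishing derivatives, and condition~(ii) of Definition~4 reduces to the incidence condition of Definition~3. Thus $(\Gamma|_J,\widetilde{\Gamma}|_{\sigma(J)})$ is a NN-FM pair in the sense of Section~3, so Lemma~1 applies on $J$ and shows that
\[
\lambda(s^{\star})=\bigl\langle\alpha(s)-\beta(s^{\star}),\,B_{\widetilde{\Gamma}}(s^{\star})\bigr\rangle
\]
is constant on $\sigma(J)$, i.e.\ $\lambda\circ\sigma$ is constant on $J$.

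Second, I would globalize $\lambda$. Because $\sigma$ is a homeomorphism and $\alpha,\beta$ are continuous, $\lambda\circ\sigma$ is continuous on all of $I$; by the previous step it is constant on each component of $D=I\setminus(N\cup Z)$, hence $D$-piecewise constant in the sense of Definition~5. A standard Cantor--Bendixson argument shows that the union of two sets with empty dense-in-itself kernel has empty dense-in-itself kernel as well, so $I\setminus D=N\cup Z$ meets the hypothesis of Lemma~2. Therefore $\lambda\circ\sigma$ is constant on the whole of $I$, and I obtain the global identity
\[
\alpha(s)=\beta(\sigma(s))+\lambda\,B_{\widetilde{\Gamma}}(\sigma(s)),\qquad s\in I,
\]
with $\lambda$ a nonzero constant.

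Third, I would promote this identity to a NN-FM structure. Differentiating it on $I\setminus N$ and using the Frenet equations for $\widetilde{\Gamma}$ yields
\[
T_{\Gamma}\,\frac{ds}{ds^{\star}}=T_{\widetilde{\Gamma}}+\lambda\,\widetilde{\varepsilon}_{1}\tau_{\widetilde{\Gamma}}\,N_{\widetilde{\Gamma}},
\]
whose squared norm equals the positive constant $\cosh^{2}\varphi$ (or $\cos^{2}\varphi$ in case~3) by parts~(iii) and~(vii) of Lemma~1. Hence $ds/ds^{\star}$ is bounded away from zero on each component of $I\setminus(N\cup Z)$, and the right-hand side extends across the scattered set $N\cup Z$ to a nowhere-zero $C^{\infty}$ vector field on $I$. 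This forces $Z$ and $N$ to be empty, $\sigma$ to be a global $C^{\infty}$ diffeomorphism, and the Frenet frame of $\Gamma$ to be constructible globally exactly as in the proof of Theorem~1. The pair $(\Gamma,\widetilde{\Gamma})$ then satisfies Definition~3, which is what is to be proved.

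The main obstacle is gluing the local NN-FM structure across the scattered bad set $N\cup Z$. The only efficient lever for this is Lemma~2, and the crucial conceptual step is identifying $\lambda$ as the piecewise constant function to which that lemma should be applied; once $\lambda$ is globally constant, the remaining smoothness and nonvanishing assertions follow cleanly from the global defining identity together with the formulas of Lemma~1.
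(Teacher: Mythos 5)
Your overall architecture does track the paper's proof: globalize $\lambda$ with Lemma~2, then show $\sigma'\neq 0$ to conclude $N=Z=\varnothing$. But there are two genuine gaps. The first is in your opening step: you declare that on each good component $J$ the pair $(\Gamma|_J,\widetilde{\Gamma}|_{\sigma(J)})$ is a NN-FM pair and invoke Lemma~1 there. Lemma~1 is a statement about NN-FM curves, i.e.\ curves already equipped with a $C^{\infty}$ family of Frenet frames; a NN-WM curve is only assumed to be a $C^{\infty}$ \emph{regular} curve, and the existence of a smooth Frenet frame is precisely one of the things that must be constructed (this is the paper's Step~2). Worse, the paper's construction of the frame defines $B_{\widetilde{\Gamma}}$ from the incidence relation $\alpha-\beta=\lambda B_{\widetilde{\Gamma}}$, which requires already knowing that $\lambda$ is a nonzero constant; so the correct order is the reverse of yours: first prove $\lambda$ constant by directly differentiating (4.1) and using only the orthogonality conditions from Definition~4(ii) (no Frenet apparatus needed), then build the frames, and only then run the Frenet-type computations. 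You also never rule out $\lambda$ vanishing somewhere, which the paper handles by a separate connectedness argument on the set $P=\{s^{\star}:\lambda(s^{\star})\neq 0\}$; where $\lambda=0$ the direction $B_{\widetilde{\Gamma}}$ in (4.1) is not even determined by the geometry.

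The second gap is in your final step. The claim that the squared norm of $T_{\widetilde{\Gamma}}+\lambda\widetilde{\varepsilon}_{1}\tau_{\widetilde{\Gamma}}N_{\widetilde{\Gamma}}$ is ``the positive constant $\cosh^{2}\varphi$ (or $\cos^{2}\varphi$)'' presupposes that the angle $\varphi$ is constant, which is not part of Lemma~1 (there $\varphi=\varphi(s^{\star})$ is a function); the paper obtains constancy of $\varphi$ by a second application of Lemma~2, to the continuous, piecewise constant function $\langle T_{\widetilde{\Gamma}},T_{\Gamma}\rangle$. More importantly, in Case~3 the constant $\cos^{2}\varphi$ may well be zero, so the quantity is not ``bounded away from zero'' and your argument for $Z=N=\varnothing$ collapses there; the paper must treat $\cos\varphi=0$ as a separate subcase, deducing from $\sigma'\lambda\mu\tau_{\Gamma}=-1$ and Lemma~2 that $\tau_{\Gamma}$ is nowhere zero before concluding. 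Until the frame construction is supplied in the right order and the $\cos\varphi=0$ subcase is handled, the proof is incomplete.
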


\begin{proof}
Let $\Gamma :\alpha (s),$ $s\in I$ be a NN-WM curve and $\widetilde{\Gamma }$
$:\beta (s^{\star }),$ $s^{\star }\in I^{\star }$ a NN-WM conjugate of $%
\Gamma $. It follows from the definition that $\Gamma $ and $\widetilde{%
\Gamma }$ each has a $C^{\infty }$ family of tangent vectors $T_{\widetilde{%
\Gamma }}(s^{\star })$,$T_{\Gamma }(s)$. Let%
\begin{equation}
\alpha (\sigma (s^{\star }))=\beta (s^{\star })+\lambda (s^{\star })B_{%
\widetilde{\Gamma }}(s^{\star })  \tag{4.1}
\end{equation}%
where $B_{\widetilde{\Gamma }}(s^{\star })$ is some unit vector function and 
$\lambda (s^{\star })\geq 0$ is some scalar function. Let $D^{\star
}=I^{\star }$ $\backslash $ $\sigma (Z)$, $D=I$ $\backslash N$ . Then $%
s(s^{\star })\in $ $C^{\infty }$on $D^{\star }.$

\textbf{Step 1.} To prove $\lambda =$constant for the all cases.

Since $\lambda =\left\Vert \beta (s^{\star })-\alpha (\sigma (s^{\star
}))\right\Vert \geq 0$, it is of class $C^{\infty }$ and it is not only
continuous on $I^{\star }$ but also on every interval of $D^{\star }$ on
which it is nowhere zero. Let $P=\left\{ s^{\star }\in I^{\star }:\lambda
(s^{\star })\neq 0\right\} $ and $X$ any component of $P$ then $P$ and also $%
X$, is open in $I^{\star }$. Let $I^{\star }$ be any component interval of $%
X\cap D^{\star }$ then $\lambda (s^{\star })$ and $B_{\widetilde{\Gamma }%
}(s^{\star })$ are of class $C^{\infty }$ on $I^{\star }$, and from (4.1) we
have%
\begin{equation*}
\alpha ^{\prime }(s)\sigma ^{\prime }(s^{\star })=\beta ^{\prime }(s^{\star
})+\lambda ^{\prime }(s^{\star })B_{\widetilde{\Gamma }}(s^{\star })+\lambda
(s^{\star })B%
{\acute{}}%
_{\widetilde{\Gamma }}(s^{\star }).
\end{equation*}%
Now by definition of a NN-WM curve we have $\left\langle \alpha ^{\prime
}(s),B_{\Gamma }(s)\right\rangle =0=$ $\left\langle \beta ^{\prime
}(s^{\star }),B_{\Gamma }(s)\right\rangle $. Hence, using the identity $%
\left\langle B_{\Gamma }^{\prime }(s),B_{\Gamma }(s)\right\rangle =0$, we
have 
\begin{equation*}
0=\lambda ^{\prime }(s^{\star })\left\langle B_{\widetilde{\Gamma }%
}(s^{\star }),B_{\widetilde{\Gamma }}(s^{\star })\right\rangle .
\end{equation*}%
Therefore $\lambda =$constant on $I^{\star }$ and $\lambda $ is constant on
each interval of the set $X\cap D^{\star }$ too. But by hypothesis $%
X\backslash D^{\star }$ has empty dense-in-itself kernel then it follows
from Lemma 2 that $\lambda $ is constant (and non-zero) on $X$. Since $%
\lambda $ is continuous on $I^{\star }$, $X$ must be closed in $I^{\star }$
but $X$ is also open in $I^{\star }$. Therefore by connectedness we must
have $X=I^{\star }$, that is, $\lambda $ is constant on $I^{\star }$.

\textbf{Step 2.} For the all cases, to prove the existence of two frames 
\begin{equation*}
\left\{ T_{\widetilde{\Gamma }}(s^{\star }),N_{\widetilde{\Gamma }}(s^{\star
}),B_{\widetilde{\Gamma }}(s^{\star })\right\} \text{ and }\left\{ T_{\Gamma
}(\sigma (s^{\star })),N_{\Gamma }(\sigma (s^{\star })),B_{\Gamma }(\sigma
(s^{\star }))\right\}
\end{equation*}%
which are Frenet frames for the NN-WM curves $\Gamma $, $\widetilde{\Gamma }$
on $D$, $D^{\star }$ respectively. Since $\lambda $ is a non-zero constant,
it follows from (4.1) that $B_{\widetilde{\Gamma }}(s^{\star })$ is
continuous on $I^{\star }$ and $C^{\infty }$ on $D^{\star }$, and is always
orthogonal to $T_{\widetilde{\Gamma }}(s^{\star })$. Now define $B_{%
\widetilde{\Gamma }}(s^{\star })=T_{\widetilde{\Gamma }}(s^{\star })\wedge
N_{\widetilde{\Gamma }}(s^{\star })$. Then $\left\{ T_{\widetilde{\Gamma }%
}(s^{\star }),N_{\widetilde{\Gamma }}(s^{\star }),B_{\widetilde{\Gamma }%
}(s^{\star })\right\} $ forms a right-handed orthonormal frame for $%
\widetilde{\Gamma }$ which is continuous on $I^{\star }$ and $C^{\infty }$
on $D^{\star }$.

Now from the definition of NN-WM curve, we see that there exists a scalar
function $\kappa _{\widetilde{\Gamma }}(s^{\star })$ such that $\kappa _{%
\widetilde{\Gamma }}(s^{\star })=\left\langle T%
{\acute{}}%
_{\widetilde{\Gamma }}(s^{\star }),N_{\widetilde{\Gamma }}(s^{\star
})\right\rangle $ on $I^{\star }$ . Hence $\kappa _{\widetilde{\Gamma }%
}(s^{\star })$ is continuous on $I^{\star }$ and $C^{\infty }$ on $D^{\star
} $. Thus the first Frenet formula holds on $D^{\star }$. It is then
straightforward to show that there exists a $C^{\infty }$ function $\tau _{%
\widetilde{\Gamma }}(s^{\star })$ on $D^{\star }$ such that the Frenet
formulas hold. Thus $\left\{ T_{\widetilde{\Gamma }}(s^{\star }),N_{%
\widetilde{\Gamma }}(s^{\star }),B_{\widetilde{\Gamma }}(s^{\star })\right\} 
$ is a Frenet frame for $\widetilde{\Gamma }$ on $D^{\star }$.

Similarly, it can easly to show that there exists a right-handed orthonormal
frame $\left\{ T_{\Gamma }(\sigma (s^{\star })),N_{\Gamma }(\sigma (s^{\star
})),B_{\Gamma }(\sigma (s^{\star }))\right\} $ for $\Gamma $ which is
continuous on $I^{\star }$ and is a Frenet frame for $\widetilde{\Gamma }$
on $D^{\star }$. Moreover, we can choose%
\begin{equation*}
N_{\Gamma }(\sigma (s^{\star }))\text{ =}\pm B_{\widetilde{\Gamma }%
}(s^{\star })\text{\ }
\end{equation*}%
\textbf{Step 3.}

To prove that $N=\varnothing $, $Z=\varnothing $ for the cases 1, 2, 4, 5.
At first we notice that on $D^{\star }$ we have%
\begin{equation*}
\left\langle T_{\widetilde{\Gamma }},T_{\Gamma }\right\rangle ^{\prime
}=\left\langle \kappa _{\widetilde{\Gamma }}N_{\widetilde{\Gamma }%
},T_{\Gamma }\right\rangle +\left\langle T_{\widetilde{\Gamma }},\kappa
_{\Gamma }N_{\Gamma }\sigma ^{\prime }(s^{\star })\right\rangle =0,
\end{equation*}%
so that $\left\langle T_{\widetilde{\Gamma }},T_{\Gamma }\right\rangle $ is
constant on each component of $D^{\star }$ and on $I^{\star }$ \ too, by
Lemma 2. Consequently there exists a constant hyperbolic angle $\varphi $
such that%
\begin{eqnarray*}
T_{\widetilde{\Gamma }}(s^{\star }) &=&T_{\Gamma }(\sigma (s^{\star }))\cosh
\varphi +B_{\Gamma }(\sigma (s^{\star }))\sinh \varphi \\
N_{\widetilde{\Gamma }}(s^{\star }) &=&T_{\Gamma }(\sigma (s^{\star }))\sinh
\varphi +B_{\Gamma }(\sigma (s^{\star }))\cosh \varphi
\end{eqnarray*}%
Further,%
\begin{equation*}
B_{\widetilde{\Gamma }}(s^{\star })=\mu N_{\Gamma }(\sigma (s^{\star })).
\end{equation*}%
Since $\left\{ T_{\widetilde{\Gamma }}(s^{\star }),N_{\widetilde{\Gamma }%
}(s^{\star }),B_{\widetilde{\Gamma }}(s^{\star })\right\} $ are of class $%
C^{\infty }$ on $D^{\star }$ then 
\begin{equation*}
\left\{ T_{\Gamma }(\sigma (s^{\star })),N_{\Gamma }(\sigma (s^{\star
})),B_{\Gamma }(\sigma (s^{\star }))\right\}
\end{equation*}%
are also of class $C^{\infty }$ with respect to $s^{\star }$ on $D^{\star }$%
. Writing (4.1) in the form%
\begin{equation*}
\beta (s^{\star })=\alpha (\sigma (s^{\star }))-\lambda (s^{\star })\mu
N_{\Gamma }(\sigma (s^{\star })).
\end{equation*}%
and differentiating with respect to $s^{\star }$ on $D^{\star }\cap \sigma
^{-1}(D)$, we have%
\begin{equation*}
T_{\widetilde{\Gamma }}(s^{\star })=\sigma ^{\prime }\left[ \left(
1+\varepsilon _{1}\varepsilon _{2}\lambda \mu \kappa _{\Gamma }\right)
T_{\Gamma }+\lambda \mu \tau _{\Gamma }B_{\Gamma }\right] .
\end{equation*}%
Hence%
\begin{equation}
\sigma ^{\prime }\left( 1+\varepsilon _{1}\varepsilon _{2}\lambda \mu \kappa
_{\Gamma }\right) =\cosh \theta \text{ and }\lambda \tau _{\Gamma }=\sinh
\theta .  \tag{4.2}
\end{equation}%
Since $\kappa _{\Gamma }(\sigma (s^{\star }))=\left\langle T_{\Gamma }%
{\acute{}}%
,N_{\Gamma }\right\rangle $ is defined and continuous on $I$ and then $%
\sigma ^{-1}(D)$ is dense, it follows by continuity that (4.2) holds
throughout $D^{\star }$.

Since $\cosh \varphi \neq 0$, then (4.2) implies that $\sigma ^{\prime }\neq
0$ on $D^{\star }$. Hence $Z=\varnothing $. Similarly $N=\varnothing $.

Consequently $\beta (s^{\star })=\alpha (\sigma (s^{\star }))-\lambda
(s^{\star })\mu N_{\Gamma }(\sigma (s^{\star }))$ is of class $C^{\infty }$
on $I$.

To prove that $N=\varnothing $, $Z=\varnothing $ for the case 3. At first we
notice that on $D^{\star }$ we have%
\begin{equation*}
\left\langle T_{\widetilde{\Gamma }},T_{\Gamma }\right\rangle ^{\prime
}=\left\langle \kappa _{\widetilde{\Gamma }}N_{\widetilde{\Gamma }%
},T_{\Gamma }\right\rangle +\left\langle T_{\widetilde{\Gamma }},\kappa
_{\Gamma }N_{\Gamma }\sigma ^{\prime }(s^{\star })\right\rangle =0,
\end{equation*}%
so that $\left\langle T_{\widetilde{\Gamma }},T_{\Gamma }\right\rangle $ is
constant on each component of $D^{\star }$ and on $I^{\star }$ \ too, by
Lemma 2. Consequently there exists a constant real angle $\varphi $ such that%
\begin{eqnarray*}
T_{\widetilde{\Gamma }}(s^{\star }) &=&T_{\Gamma }(\sigma (s^{\star }))\cos
\varphi -B_{\Gamma }(\sigma (s^{\star }))\sin \varphi \\
N_{\widetilde{\Gamma }}(s^{\star }) &=&T_{\Gamma }(\sigma (s^{\star }))\sin
\varphi +B_{\Gamma }(\sigma (s^{\star }))\cos \varphi
\end{eqnarray*}%
Since $\left\{ T_{\widetilde{\Gamma }}(s^{\star }),N_{\widetilde{\Gamma }%
}(s^{\star }),B_{\widetilde{\Gamma }}(s^{\star })\right\} $ are of class $%
C^{\infty }$ on $D^{\star }$ then 
\begin{equation*}
\left\{ T_{\Gamma }(\sigma (s^{\star })),N_{\Gamma }(\sigma (s^{\star
})),B_{\Gamma }(\sigma (s^{\star }))\right\}
\end{equation*}
are also of class $C^{\infty }$ with respect to $s^{\star }$ on $D^{\star }$%
. Writing (4.1) in the form%
\begin{equation*}
\beta (s^{\star })=\alpha (\sigma (s^{\star }))-\lambda (s^{\star })\mu
N_{\Gamma }(\sigma (s^{\star })).
\end{equation*}%
and differentiating with respect to $s^{\star }$ on $D^{\star }\cap \sigma
^{-1}(D)$, we have%
\begin{equation*}
T_{\widetilde{\Gamma }}(s^{\star })=\sigma ^{\prime }\left[ \left( 1-\lambda
\mu \kappa _{\Gamma }\right) T_{\Gamma }+\lambda \mu \tau _{\Gamma
}B_{\Gamma }\right] .
\end{equation*}%
Hence%
\begin{equation}
\sigma ^{\prime }\left( 1-\lambda \mu \kappa _{\Gamma }\right) =\cos \varphi 
\text{ and }-\sigma ^{\prime }\lambda \mu \tau _{\Gamma }=\sin \varphi . 
\tag{4.3}
\end{equation}%
Since $\kappa _{\Gamma }(\sigma (s^{\star }))=\left\langle T_{\Gamma }%
{\acute{}}%
,N_{\Gamma }\right\rangle $ is defined and continuous on $I$ and then $%
\sigma ^{-1}(D)$ is dense, it follows by continuity that (4.3) holds
throughout $D^{\star }$

If\textbf{\ }$\cos \varphi \neq 0$, then (4.3) implies that $\sigma ^{\prime
}\neq 0$ on $D^{\star }$. Hence $Z=\varnothing $. Similarly $N=\varnothing $.

If $\cos \varphi =0$ then%
\begin{equation*}
T_{\widetilde{\Gamma }}(s^{\star })=-B_{\Gamma }(\sigma (s^{\star })).
\end{equation*}%
Differentiation of (4.1) with respect to $s^{\star }$ in $D^{\star }$ gives%
\begin{equation*}
T_{\widetilde{\Gamma }}=\sigma ^{\prime }\lambda \mu \tau _{\Gamma
}B_{\Gamma }.
\end{equation*}%
Hence using (4.3) we have%
\begin{equation*}
\sigma ^{\prime }\lambda \mu \tau _{\Gamma }=-1.
\end{equation*}%
Therefore%
\begin{equation*}
\tau _{\Gamma }=-\frac{1}{\sigma 
{\acute{}}%
\lambda \mu },
\end{equation*}%
and so also on $I^{\star }$, by Lemma 2. It follows that $\tau _{\Gamma }$
is nowhere zero on $I^{\star }$. Consequently $\beta (s^{\star })=\alpha
(\sigma (s^{\star }))-\lambda (s^{\star })\mu N_{\Gamma }(\sigma (s^{\star
}))$ is of class $C^{\infty }$ on $I^{\star }$. Hence $N=\varnothing $.
Similarly $Z=\varnothing $.
\end{proof}


\begin{thebibliography}{9}
\bibitem{1} Lai,H.F. , "Weakened Bertrand curves", Tohoku Math. Journ., Vol:
19, No. 2, pp:141-155, 1967.

\bibitem{2} Liu, H., Wang, F., \textquotedblleft Mannheim partner curves in
3-space\textquotedblright , Journal of Geometry, Vol. 88, No. 1-2,
pp:120-126, 2008.

\bibitem{3} Orbay,K.,Kasap, E.,"On Mannheim partner curves in $E^{3}$%
,International Journal of Physical Sciences Vol. 4 (5), pp: 261-264, May,
2009.

\bibitem{4} Oztekin,H.B.,"Weakened Bertrand curves in The Galilean Space $%
G_{3}$ ",J. Adv. Math. Studies,Vol. 2, No. 2, pp:69-76,2009.

\bibitem{5} Ilarslan, K., "Spacelike Normal Curves in Minkowski Space" ,
Turkish J. Math., (29), pp:53-63, 2005.
\end{thebibliography}
\end{document}